\newtheorem{theorem}{Theorem}[section]
\newtheorem{thm}[theorem]{Theorem}
\newtheorem{lem}[theorem]{Lemma}
\newtheorem{proposition}[theorem]{Proposition}
\newtheorem{corollary}[theorem]{Corollary}
\theoremstyle{definition}
\newtheorem{defn}[theorem]{Definition}
\theoremstyle{remark}
\newtheorem{rem}[theorem]{Remark}
\numberwithin{equation}{section}
 \DeclareMathAlphabet{\mathpzc}{OT1}{pzc}{m}{it}
 \newcommand{\M}{\mathcal{M}}
 \newcommand{\E}{\mathbb{E}}            
 \newcommand{\T}{\mathbb{T}}
 \newcommand{\Ll}{\langle}
 \newcommand{\Rr}{\rangle}
 \newcommand{\N}{\mathbb{N}}
 \newcommand{\R}{\mathbb{R}}
 \newcommand{\Z}{\mathbb{Z}}
 \newcommand{\FF}{\mathcal{F}}
 \newcommand{\PP}{\mathbb{P}}
 \newcommand{\mcl}{\mathcal}
 \newcommand{\Be}{\begin{equation}}
 \newcommand{\Ee}{\end{equation}}
 \newcommand{\Bs}{\begin{split}}
 \newcommand{\Es}{\end{split}}
  \newcommand{\Bes}{\begin{equation*}}
 \newcommand{\Ees}{\end{equation*}}
 \newcommand{\BT}{\begin{thm}}
 \newcommand{\ET}{\end{thm}}
 \newcommand{\Bp}{\begin{proof}}
 \newcommand{\Ep}{\end{proof}}
 \newcommand{\BL}{\begin{lem}}
 \newcommand{\EL}{\end{lem}}
 \newcommand{\BP}{\begin{proposition}}
 \newcommand{\EP}{\end{proposition}}
 \newcommand{\BC}{\begin{corollary}}
 \newcommand{\EC}{\end{corollary}}
 \newcommand{\BR}{\begin{rem}}
 \newcommand{\ER}{\end{rem}}
 \newcommand{\BD}{\begin{defn}}
 \newcommand{\ED}{\end{defn}}
 \newcommand{\BI}{\begin{itemize}}
 \newcommand{\EI}{\end{itemize}}
 \newcommand{\eqn}{equation}
  \newcommand{\dif}{{\rm d}}
\def\PP{\mathbb P}
\def\Om{{\Omega}}
\def\<{\left<}\def\>{\right>}
\def\({\left(}\def\){\right)}
\begin{document}
\title
[\ \ \ \ LDP for stochastic real Ginzburg-Landau equation]{Large deviation principle of occupation measure for stochastic real Ginzburg-Landau equation driven by $\alpha$-stable noises}

\author[R. Wang]{Ran Wang}
\address{School of Mathematical Sciences, University of Science and Technology of China, Hefei, China.} \email{wangran@ustc.edu.cn}

 \author[J. Xiong]{Jie Xiong }
\address{Department of Mathematics, Faculty of Science and Technology,  University of  Macau, Taipa, Macau.}
\email{jiexiong@umac.mo}

\author[L. Xu]{Lihu Xu}
\address{Department of Mathematics, Faculty of Science and Technology,  University of  Macau, Taipa, Macau.}
\email{lihuxu@umac.mo}

\maketitle
\begin{minipage}{140mm}
\begin{center}
{\bf Abstract}
\end{center}

  We establish a large deviation principle for the occupation measure of the stochastic real Ginzburg-Landau equation driven by  $\alpha$-stable noises.  The proof is based on a hyper-exponential recurrence criterion. Our result indicates a phenomenon that
  strong dissipation beats heavy tailed noises to produce a large deviation, it seems to us that this phenomenon has not been reported in the known literatures.
\end{minipage}

\vspace{4mm}

\medskip
\noindent
{\bf Keywords}: Stochastic  Ginzburg-Landau equation; $\alpha$-stable noises;  Large deviation principle;   Occupation measure.

\medskip
\noindent
{\bf Mathematics Subject Classification (2000)}: \ {60F10, 60H15,  60J75}.


\section{Introduction}
 Consider the stochastic real Ginzburg-Landau equation driven by $\alpha$-stable noises on torus $\mathbb T:=\mathbb R/\mathbb Z$ as follows:
\begin{\eqn} \label{e:MaiSPDE}
\dif X-\partial_{\xi}^2X\dif t-(X-X^3)\dif t= \dif L_t,
\end{\eqn}
where $X:[0,+\infty)\times \mathbb T\times\Om\rightarrow\mathbb R$ and $L_t$ is an   $\alpha$-stable noise.
  Eq. \eqref{e:MaiSPDE} admits a unique mild solution $X$ in the c\`adl\`ag space almost surely.
When $\alpha \in (3/2,2)$, the  Markov process  $X$ is strong Feller and irreducible, and it has a unique invariant measure $\pi$. See \cite{Xu13, WXX} or Section 2  below for  details.
By the uniqueness \cite{DPZ96}, $\pi$ is ergodic in the sense that
$$
\lim_{T\rightarrow \infty}\frac1T\int_0^T \Psi(X_t)\dif t=\int \Psi \dif \pi\ \ \ \ \mathbb P {\text-a.s.}
$$
for all initial state $X_0$ and all continuous and bounded functions $\Psi$.

Let $\mathcal L_t$ be the occupation measure of the system \eqref{e:MaiSPDE}  given by
\begin{equation}\label{e:occupation}
\mathcal L_t(A):=\frac1t\int_0^t\delta_{X_s}(A)\dif s \ \ \  \ \text{ for any measurable set } A,
\end{equation}
where $\delta_a$ is the Dirac measure at $a$.
In our previous paper \cite{WXX}, we prove that the system converges to its invariant measure $\mu$ with exponential rate under a topology stronger than total variation, and the occupation measure $\mathcal L_t$ obeys the moderate deviation principle by constructing some Lyapunov test functions.

\vskip0.3cm
In this paper, we further study  the  Large Deviation Principle (LDP) for the occupation measure $\mathcal L_t$.  The LDP for empirical measures is one of the strongest ergodicity results
for the long time behavior of Markov processes. It has been one of the classical research
topics in probability since the pioneering work of Donsker and Varadhan \cite{DV}. Refer to the books \cite{DZ,DS, FW}.

Based on the hyper-exponential recurrence criterion developed by Wu \cite{Wu01}, we  prove that the occupation measure $\mathcal L_t$ obeys an LDP under $\tau$-topology.  As a consequence, we also obtain the exact rate of exponential ergodicity obtained in \cite{Xu13}.

\vskip0.3cm
For  stochastic partial differential equations,
the problems of  large deviations   have been intensively studied in recent years. Most of them, however, are concentrated on the small noise large deviation principles of Freidlin-Wentzell's type, which provide estimates for the probability that stochastic systems converge to their deterministic part as noises tend to zero,  see \cite{CR, Fre, KX, Sowers, Xu-Zhang} and the references therein.  But there are only very few papers on  the  large deviations of Donsker-Varadhan's  tpye for  large time, which  estimate   the probability  of the occupation measures'  deviation from invariant measure. Gourcy \cite{Gou1, Gou2}  established the LDP for occupation measures of stochastic Burgers and Navier-Stokes equations by the means of the hyper-exponential recurrence. Jak\u{s}i\`c et al. \cite{JNPS1} established the LDP for occupation measures of SPDE with smooth random perturbations by Kifer's large deviation criterion \cite{Kif}. Jak\u{s}i\`c et al. \cite{JNPS2} also gave the large deviations estimates for dissipative PDEs with rough noise by the hyper-exponential recurrence criterion. Our result indicates a phenomenon that
  strong dissipation beats heavy tailed noises to produce a large deviation. It seems to us that this phenomenon has not been reported in the known literatures, we shall concentrate on studying it in future research.

\vskip0.3cm

Finally we recall some literatures on the study of invariant measures and the long time behavior of stochastic systems driven by $\alpha$-stable type noises. \cite{PSXZ11, PXZ10} studied the exponential mixing for a family of semi-linear SPDEs with Lipschitz nonlinearity, while \cite{DXZ11} obtained the existence of invariant measures for 2D stochastic Navier-Stokes equations forced by $\alpha$-stable noises with $\alpha\in (1,2)$. \cite{Xu14} proved the exponential mixing for a family of 2D SDEs forced by degenerate $\alpha$-stable noises.
For the long term behaviour about stochastic system drive by L\'evy noises,
we refer to \cite{Do08, DXi11, DXZ09,  FuXi09, Mas} and the literatures therein.

\vskip0.3cm

The paper is organized as follows. In Section 2, we  give a brief review of some known results about  the   stochastic  Ginzburg-Landau equations, and present the main result of this paper. In Section 3, we  recall some general facts about LDP for strong Feller and topologically irreducible   Markov processes. In the last  section, we  will prove the main result.

Throughout this paper, $C_p$ is a positive constant depending on some parameter $p$, and $C$ is a constant depending on no  specific parameter (except $\alpha, \beta$), whose value  may be different from line to line by convention.

\section{The model and the results}

Let $\T= \R/\Z$ be equipped with the usual Riemannian metric, and let $\dif \xi$
denote the Lebesgue measure on $\T$. For any $p\ge1$, let
$$
L^p(\T;\R):=\left\{x: \T\rightarrow\R; \|x\|_{L^p}:=\left(\int_\T |x(\xi)|^p \dif\xi\right)^{\frac1p}<\infty\right\}.
 $$
Denote
$$H:=\bigg\{x\in L^2(\T; \R); \int_\T x(\xi) \dif\xi =0\bigg\},$$
it is a separable real Hilbert space with inner product
$$\Ll x,y \Rr_H:=\int_\T x(\xi)y(\xi) \dif\xi,\ \ \ \ \ \forall \ x, y \in H.$$
For any $x\in H$, let
$$
\|x\|_H:=\|x\|_{L^2}=\left(\langle x,x\rangle_H\right)^{\frac12}.
$$
Let $\Z_*:=\Z \setminus \{0\}$. It is well known that
$$\left\{e_k; e_k=e^{i2 \pi k\xi}, \ k \in \Z_*\right\} $$
is an orthonormal basis of $H$. For each $x \in H$,
it can be represented by  Fourier series
$$x=\sum_{k \in \Z_*} x_k e_k  \ \ \ \ {\rm with} \ \ \ x_k \in \mathbb C, \ x_{-k}=\overline{x_k}.$$

Let $\Delta$ be the Laplace operator on $H$. It is well known that
$D(\Delta)=H^{2,2}(\T) \cap H$. In our setting, $\Delta$ can be determined by the following
relations: for all $k \in \Z_*$,
$$\Delta e_k=-\gamma_k e_k\ \ \ \ {\rm with} \ \ \gamma_k=4 \pi^2 |k|^2,$$
with
$$H^{2,2}(\T) \cap H=\left\{x \in H; \ x=\sum_{k \in \Z_*} x_k e_k, \ \sum_{k \in \Z_*} |\gamma_k|^{2} |x_k|^2<\infty\right\}.$$
Denote
$$A=-\Delta, \ \ \ \ D(A)=H^{2,2}(\T) \cap H.$$
Define the operator $A^{\sigma}$ with $\sigma \ge 0$ by
$$A^\sigma x=\sum_{k \in \Z_*} \gamma_k^{\sigma} x_ke_k, \ \ \ \ \ \ x \in D(A^\sigma),$$
where $\{x_k\}_{k \in \Z_*}$ are the Fourier coefficients of $x$, and
$$D(A^\sigma):=\left\{x \in H: \ x=\sum_{k \in \Z_*} x_k e_k, \sum_{k \in \Z_*} |\gamma_k|^{2 \sigma} |x_k|^2<\infty\right\}.$$

Given $x \in D(A^\sigma)$, its norm is
$$\|A^\sigma x\|_H:=\left(\sum_{k \in \Z_*} |\gamma_k|^{2 \sigma} |x_k|^2\right)^{1/2}.$$
For $\sigma>0$, let
$$
H_{\sigma}:=D(A^\sigma), \ \  \ \  \|x\|_{H_{\sigma}}:=\|A^\sigma x\|_H.$$
Then, $H_{\sigma}$ is densely and compactly embedded in $H$.  Particularly, let
$$V:=D(A^{1/2}).$$

We shall study  $1$D stochastic Ginzburg-Landau equation on $\T$ as the following
\begin{equation} \label{e:XEqn}
\begin{cases}
\dif X_t + A X_t\dif t= N(X_t) \dif t + \dif L_t, \\
X_0=x,
\end{cases}
\end{equation}
where
\begin{itemize}
\item[(i)] the nonlinear term $N$ is defined by
\begin{equation*} \label{e:NonlinearB}
N(u)= u-u^3 , \ \ \ \ \ u \in H.
\end{equation*}
\item[(ii)] $L_t=\sum_{k \in \Z_*} \beta_k l_k(t) e_k$ is an $\alpha$-stable process on $H$ with
$\{l_k(t)\}_{k\in \Z_*}$ being i.i.d. 1-dimensional symmetric $\alpha$-stable process sequence with $\alpha>1$, see \cite{sato}. Moreover, we assume that there exist some $C_1, C_2>0$ so that $C_1 \gamma_k^{-\beta} \le |\beta_k| \le C_2 \gamma_k^{-\beta}$ with $\beta>\frac 12+\frac 1{2\alpha}$.
\end{itemize}

We shall   use the following inequalities (see \cite{Xu13}):
\Be \label{e:eAEst}
\|A^{\sigma} e^{-At}\|_H \le C_\sigma t^{-\sigma}, \ \ \ \ \ \forall \ \sigma>0 \ \ \ \forall \ t>0;
\Ee
\Be \label{e:L4}
\|x\|^4_{L^4}\le \|x\|_V^2\cdot\|x\|_H^2\le \|x\|_V^4, \ \ \ \ \  \forall  \ x\in V;
\Ee
\Be \label{e:3H}
\|x^3\|_{H}\le C\|A^{\frac14}x\|_H^2\cdot\|x\|_H \le C\|x\|_V\|x\|_H^2 , \ \ \ \ \  \forall  \ x\in V.
\Ee

\vskip0.3cm

\BD We say that a predictable $H$-valued stochastic process $X=(X_t^x)$ is a mild solution to Eq. \eqref{e:XEqn} if, for any $t\ge0, x\in H$, it holds ($\mathbb P$-a.s.):
\begin{equation}\label{e: mild solution}
X^x_t(\omega)=e^{-At} x+\int_0^t e^{-A(t-s)} N(X_s^x(\omega))\dif s+\int_0^t e^{-A(t-s)} \dif L_s(\omega).
\end{equation}
\ED

The following  properties for the solutions   can be found in \cite{Xu13, WXX}.
\begin{thm}[\cite{Xu13, WXX}] \label{Thm Xu 13} Assume that $\alpha \in (3/2,2)$ and $\frac 12+\frac{1}{2\alpha}<\beta<\frac 32-\frac{1}{\alpha}$, the following statements hold:
\begin{enumerate}
\item    For every $x \in H$ and $\omega \in \Omega$ a.s.,
Eq. \eqref{e:XEqn} admits a unique mild solution $X=(X^x_t)_{t\ge0,x\in H} \in D([0,\infty);H) \cap D((0,\infty);V)$.

\item  $X$ is a Markov process, which is strong Feller and irreducible in $H$, and
 $X$ admits a unique invariant measure $\pi$.
\end{enumerate}
\end{thm}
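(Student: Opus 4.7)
The plan is to follow the approach developed in \cite{Xu13, WXX}. For (1), I would split $X = Y + Z$ where $Z_t := \int_0^t e^{-A(t-s)}\dif L_s$ is the stochastic convolution (an Ornstein--Uhlenbeck type process driven by $L$). Using the decomposition $L_t = \sum_k \beta_k l_k(t) e_k$ with $|\beta_k| \le C_2 \gamma_k^{-\beta}$ and the assumption $\beta > \frac12+\frac1{2\alpha}$, a direct computation on Fourier modes shows that $Z$ lives in $D(A^\sigma)$ for some $\sigma>0$, and in particular in $V$, with c\`adl\`ag paths. Then $Y = X - Z$ solves the random evolution equation $\frac{\dif}{\dif t}Y + AY = N(Y+Z)$ with $Y_0 = x$, for which existence and uniqueness follow from a standard Banach fixed-point argument in $C([0,T];H)\cap L^2(0,T;V)$. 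The inequalities \eqref{e:L4}--\eqref{e:3H} together with the smoothing estimate \eqref{e:eAEst} are used to handle the cubic nonlinearity locally in time.

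Global existence is then obtained from the dissipative structure. Testing the equation for $Y$ against $Y$ and using $\langle u^3,u\rangle_H = \|u\|_{L^4}^4 \ge 0$ gives an energy identity of the form $\tfrac12 \tfrac{\dif}{\dif t}\|Y\|_H^2 + \|Y\|_V^2 \le C(\|Z_t\|_V)(1 + \|Y\|_H^2)$, which combined with the a priori integrability of $\|Z\|_V$ prevents blow-up and yields the mild solution in $D([0,\infty);H)\cap D((0,\infty);V)$. Here the extra $V$-regularity for $t>0$ comes from the regularizing effect of $e^{-At}$ applied to the initial datum, together with the fact that $Z$ is already in $V$.

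For (2), the Markov property follows at once from the mild formula and the independent-increments property of $L$. The existence of an invariant measure $\pi$ is obtained by Krylov--Bogoliubov after establishing a uniform-in-time bound on $\mathbb E[\|X_t\|_V^p]$ for some small $p$, which again uses dissipativity and the moments of $\|Z\|_V$. Uniqueness and ergodicity then reduce, via Doob's theorem, to verifying the strong Feller and irreducibility properties.

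The main obstacle is the strong Feller property, since the noise is of pure-jump $\alpha$-stable type and the classical Bismut--Elworthy--Li approach for Wiener noise is not directly applicable. The strategy in \cite{Xu13} is to work first with a Galerkin approximation: on the finite-dimensional projection the SDE is driven by a nondegenerate $\alpha$-stable L\'evy process, for which a gradient estimate on the transition semigroup is available (through analysis of the density of the $\alpha$-stable distribution, after subtracting the OU part). Passing to the limit with care, using the dissipativity to get uniform bounds, transfers the strong Feller estimate to the infinite-dimensional system. Irreducibility is more routine: one shows that the deterministic controlled system $\dot u + A u = N(u) + \dot\varphi$ can be steered from any $x$ into any $H$-neighborhood of any target $y$ by a suitable smooth control $\varphi$, and then invokes the full support of the $\alpha$-stable law on the relevant space to conclude that the corresponding event for $L$ has positive probability.
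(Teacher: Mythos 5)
The paper does not actually prove Theorem \ref{Thm Xu 13}; it is quoted as a known result from \cite{Xu13, WXX}, so there is no internal proof to compare against. Your outline---the splitting $X=Y+Z$ with the stochastic convolution $Z$ regular in $V$ under $\beta>\frac12+\frac1{2\alpha}$, local well-posedness of the random PDE for $Y$ by a fixed-point argument, global existence from the dissipative energy estimate, Krylov--Bogoliubov for existence of $\pi$, strong Feller via Galerkin approximation and gradient estimates for the nondegenerate finite-dimensional $\alpha$-stable systems, irreducibility via a controllability-plus-support argument, and uniqueness by Doob's theorem---is precisely the strategy carried out in those references, so your proposal follows essentially the same route as the cited proofs.
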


Let $C_b(H)$ (resp. $b\mathcal B(H)$) be the space of all bounded continuous  (resp. measurable) functions on $H$.
Let   $\mathcal M_1(H)$ be the space of  probability measures on $H$ equipped with the Borel $\sigma$-field $\mathcal B (H)$.
On  $\mathcal M_1(H)$, let  $\sigma(\mathcal M_1(H), b\mathcal B(H))$ be the $\tau$-topology  of converence against measurable and bounded functions  which is much stronger than the usual weak convergence topology $\sigma(\mathcal M_1(H), C_b(H))$, see \cite[Section 6.2]{DZ}.
\vskip0.3cm

Now, we are at the position to state our main result. Recall $\mathcal L_T$ defined by \eqref{e:occupation}.
\begin{theorem}\label{thm main} Assume that $\alpha \in (3/2,2)$ and  $\frac 12+\frac{1}{2\alpha}<\beta<\frac 32-\frac{1}{\alpha}$.
Then the family $\PP_{\nu}(\mathcal L_T\in \cdot)$ as $T\rightarrow +\infty$ satisfies the large deviation principle with respect to the $\tau$-topology, with speed $T$ and rate function $J$ defined by \eqref{rate func} below, uniformly for any initial measure $\nu$ in $\mathcal M_1(H)$.  More precisely, the following three properties hold:
\begin{itemize}
  \item[(a1)]  for any $a\ge0$, $\{\mu\in \mathcal M_1(H); J(\mu)\le a \}$ is compact in  $(\mathcal M_1(H),\tau)$;
  \item[(a2)] (the lower bound) for any  open set $G$ in $(\mathcal M_1(H),\tau)$,
   $$
   \liminf_{T\rightarrow \infty}\frac1T\log\inf_{\nu\in\mathcal M_1(H)}\mathbb P_{\nu}(\mathcal L_T\in G)\ge -\inf_G J;
   $$
  \item[(a3)](the upper bound) for any  closed set $F$ in  $(\mathcal M_1(H),\tau)$,
   $$
   \limsup_{T\rightarrow \infty}\frac1T\log\sup_{\nu\in\mathcal M_1(H)}\mathbb P_{\nu}(\mathcal L_T\in F)\le -\inf_F J.
   $$
\end{itemize}
\end{theorem}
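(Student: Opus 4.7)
The proof proceeds via the hyper-exponential recurrence criterion of Wu \cite{Wu01}, in the form already used for dissipative SPDEs by Gourcy \cite{Gou1,Gou2} and Jak\v{s}i\'c et al.~\cite{JNPS2}. That criterion asserts that, once the semigroup $(P_t)$ associated with \eqref{e:XEqn} is strong Feller and topologically irreducible on $H$, the LDP statements (a1)--(a3) in the $\tau$-topology follow from the hyper-exponential recurrence property: for every $\lambda>0$ there exist a compact set $K_\lambda\subset H$ and a time $t_0>0$ with
$$\sup_{x\in H}\mathbb E_x\exp(\lambda\tau_{K_\lambda})<\infty,\qquad\tau_{K_\lambda}:=\inf\{t\ge t_0:\,X^x_t\in K_\lambda\}.$$
The strong Feller property and irreducibility are provided by Theorem~\ref{Thm Xu 13}, so the whole argument reduces to verifying this recurrence condition; the proof is then closed by an appeal to the abstract Wu-type criterion to be recalled in Section~3.

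To obtain the recurrence, I would exploit the compact embedding $V\hookrightarrow H$ and take the candidate compact sets to be balls $K_R:=\{x\in H:\,\|x\|_V\le R\}$. The heart of the argument is a Foster--Lyapunov estimate built from a fractional power of the Sobolev norm. Since $L$ admits only moments of order strictly less than $\alpha$, the usual quadratic energy is unavailable; instead, I fix some $p\in(0,\alpha)$ and analyse $\|X_t\|_H^p$ by It\^o's formula. The drift contribution is controlled by the strong dissipation
$$\langle N(x),x\rangle_H=\|x\|_H^2-\|x\|_{L^4}^4\le\|x\|_H^2-\|x\|_H^4,$$
which yields super-linear damping as soon as $\|x\|_H$ is large. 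Combining this with \eqref{e:L4}, \eqref{e:3H}, and the smoothing estimate \eqref{e:eAEst}, I expect to derive both a moment bound of the type $\mathbb E_x\|X_t\|_V^p\le C_1 e^{-c t}\|x\|_H^p+C_2$ for $t\ge 1$, and, more importantly, an exponential moment bound $\mathbb E_x\exp(\eta\|X_t\|_H^p)\le C_3\exp(e^{-c t}\eta\|x\|_H^p)+C_4$ for sufficiently small $\eta>0$. The strong Markov property together with a Dynkin-type inequality then converts these bounds into the required uniform hyper-exponential tail for $\tau_{K_R}$ once $R=R(\lambda)$ is taken large.

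The main obstacle is accommodating the heavy tails of $L$ inside these energy estimates. My plan is to decompose $L=L^{(\rho)}+\widetilde L^{(\rho)}$ into a small-jump part with jumps of size at most $\rho$ and a large-jump compound Poisson remainder. The small-jump part is a square-integrable martingale whose exponential moments can be controlled through the exponential martingale inequality for compensated Poisson integrals, so its contribution is dominated by the cubic dissipation once the fractional exponent $p<\alpha$ is chosen appropriately. The remainder has finite jump intensity, and conditioning on its jump times reduces the analysis on each excursion to the small-jump case with an enlarged starting point, while the Poisson nature of the jump times contributes only a fixed exponential rate that the super-linear damping absorbs. This is precisely the mechanism by which \emph{strong dissipation beats heavy-tailed noises} announced in the abstract; I expect it to be the most delicate step of the proof, and once it is in place the LDP follows directly from Wu's criterion.
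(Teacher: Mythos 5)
Your overall reduction is the right one and matches the paper: strong Feller plus irreducibility from Theorem~\ref{Thm Xu 13}, then Wu's criterion (Theorem~\ref{thm Wu}), so everything hinges on the hyper-exponential recurrence. The gap is in how you propose to verify it. The centrepiece of your plan is an exponential moment bound of the form $\mathbb E_x\exp(\eta\|X_t\|_H^p)\le C_3\exp(e^{-ct}\eta\|x\|_H^p)+C_4$. No such bound can hold for $\alpha$-stable forcing: a single large jump of $L$ just before time $t$ makes $\|X_t\|_H$ large with only polynomially small probability (the L\'evy measure has density $\sim|z|^{-1-\alpha}$), so $\mathbb E_x\exp(\eta\|X_t\|_H^p)=+\infty$ for every $\eta>0$ and every $p>0$. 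Your small-jump/large-jump decomposition does not repair this, because after conditioning on the jump times of the compound Poisson part the jump \emph{sizes} are still heavy-tailed, and $\mathbb E\exp(\eta|\Delta L|^p)=\infty$. This is exactly the point where the heavy tails bite, and it is why the Lyapunov-function route you sketch, which is standard for Gaussian or light-tailed noise, breaks down here.

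The mechanism the paper actually uses is different and weaker in its moment requirements. Writing $X=Y+Z$ with $Z$ the stochastic convolution, the Riccati-type differential inequality $\frac{\dif}{\dif t}\|Y_t\|_H^2\le-\|Y_t\|_H^4+K_T^2$ coming from the cubic dissipation shows, by comparison, that $Y$ \emph{comes down from infinity}: $\sup_{t\in[T/2,T]}\|Y_t\|_H\le C(T)(1+\sup_{t\le T}\|Z_t\|_V)$ with a constant independent of the initial datum (Lemma~\ref{l:Y}). Combined with the maximal inequality for $Z$ (Lemma~\ref{l:ZEst}, which only gives moments of order $p<\alpha$), this yields a \emph{polynomial} moment bound $\mathbb E_x\|X_1\|_{H_\delta}^p\le C_{\delta,p}$, uniform over all initial conditions $x\in H$ (Lemma~\ref{l: X}). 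That uniformity is the whole point: iterating it over unit time steps via the Markov property and Chebyshev gives $\mathbb P_\nu(\tau_M>n)\le(C_{\delta,p}M^{-p})^n$, a geometric decay whose ratio can be made smaller than $e^{-\lambda}$ by taking $M$ large, and this alone produces $\sup_\nu\mathbb E^\nu e^{\lambda\tau_M}<\infty$. No exponential moment of the state is ever needed; the ``strong dissipation beats heavy tails'' phenomenon lives in the initial-condition-free bound on $Y$, not in an exponential Lyapunov estimate. You should replace your central step accordingly; as written, the proposed key inequality is false and the argument cannot be completed along that line.
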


\begin{rem} For every $f:H\rightarrow \R$ measurable and bounded, as $\nu\rightarrow\int_{H}f\dif \nu$ is continuous w.r.t. the $\tau$-topology, then by the contraction principle (\cite[Theorem 4.2.1]{DZ}), $$\mathbb P_{\nu}\left(\frac{1}{T}\int_0^T f(X_s)\dif s\in \cdot\right)$$
satisfies the LDP on $\R$ uniformly for any initial measure $\nu$ in $\mathcal M_1(H)$,  with the rate function given by
$$
J^f(r)=\inf\left\{J(\nu)<+\infty|\nu\in\mathcal M_1(H)\  \text{and } \int f\dif\nu=r \right\},\ \ \forall r\in\R.
$$
\end{rem}

\section{General results about large deviations}

In this section, we recall some general results on the Large Deviation Principle for strong Feller and irreducible   Markov processes. We follow \cite{Wu01}.

\vskip0.3cm
\subsection{Notations and entropy of Donsker-Varadhan}\ \

Let $E$ be a Polish metric space. Consider a general $E$-valued c\`adl\`ag  Markov process
$$
\left(\Omega, \{\mathcal F_t\}_{t\ge0}, \mathcal F, \{X_t(\omega)\}_{t\ge0}, \{\mathbb P_x\}_{x\in E}\right),
$$
where
\begin{itemize}
  \item  $\Omega=D( [0,+\infty); E)$, which is the space of the c\`adl\`ag functions from $[0,+\infty)$ to $E$ equipped with the  Skorokhod topology; for any $\omega\in \Omega$, $X_t(\omega)=\omega(t)$;
  \item $\FF_t^0=\sigma\{X_s; 0\le s\le t\}$ for any $t\ge 0$ (nature filtration);
  \item $\FF=\sigma\{X_t; t\ge0\}$ and $\PP_x(X_0=x)=1$.
\end{itemize}
 Hence, $\PP_{x}$ is the law of the Markov process with initial state $x\in E$.  For any initial measure $\nu$ on $E$, let $\PP_{\nu}(\dif \omega):=\int_E \PP_x(\dif \omega)\nu(\dif x)$. Its transition probability is denoted by $\{P_t(x, dy)\}_{t\ge0}$.

For all $f\in b\mathcal B(E)$, define
$$
P_tf(x)=\int_E P_t(x, \dif y)f(y)  \ \ \ \text{for all } t\ge0, x\in E.
$$

For any $t>0$, $P_t$ is said to be {\it strong Feller} if $P_t\varphi\in C_b(E)$ for any $\varphi\in b\mathcal B(E)$; $P_t$ is {\it irreducible} in $E$ if $P_t1_O(x)>0$ for any $x\in E$ and any non-empty open subset $O$ of $E$. $\{P_t\}_{t\ge0}$ is  {\it accessible }to $x \in E$,  if the resolvent $\{\mcl R_{\lambda}\}_{\lambda>0}$ satisfies
$$\mcl R_{\lambda}(y, \mcl U):= \int_0^\infty e^{-\lambda t}P_t(y, \mcl U) \dif t>0 , \ \ \forall \lambda>0$$
for all $y \in E$ and all neighborhoods $\mcl U$ of $x$. Notice that the accessibility of $\{P_t\}_{t\ge0}$ to any $x\in E$ is   the so called {\it topological transitivity} in Wu \cite{Wu01}.

\vskip0,3cm

The empirical measure of level-$3$ (or process level) is given by
$$
R_t:=\frac1t\int_0^t \delta_{\theta_s X}\dif s
$$
where $(\theta_sX)_t=X_{s+t}$ for all $t, s\ge0$ are the shifts on $\Omega$. Thus, $R_t$ is a random element of $\mathcal M_1(\Omega)$, the space of all probability measures on $\Omega$.

The level-$3$ entropy functional of Donsker-Varadhan $H:\mathcal M_1(\Omega)\rightarrow [0,+\infty]$ is defined by
\begin{equation*} \label{e: DV}
H(Q):=\begin{cases}
 \mathbb E^{\bar Q}h_{\mathcal F_1^0}(\bar Q_{w(-\infty,0]};\mathbb P_{w(0)}) & \text{if } Q\in \mathcal M_1^s(\Omega);  \\
+\infty & \text{otherwise},
\end{cases}
\end{equation*}
where
 \begin{itemize}
   \item $\mathcal M_1^s(\Omega)$ is the subspace of $\mathcal M_1(\Omega)$, whose  elements are moreover stationary;
   \item $\bar Q$ is the unique stationary extension of $Q\in \mathcal M_1^s(\Omega)$ to $\bar \Omega:=D(\mathbb R; E)$; $\mathcal F_t^s=\sigma\{X(u); s\le u\le t\},\forall  s,t\in\R, s\le t$;
   \item $\bar Q_{w(-\infty,t]}$ is the regular conditional distribution of $\bar Q$ knowing $\mathcal F_t^{-\infty}$;
   \item $h_{\mathcal G}(\nu;\mu)$ is the usual relative entropy or Kullback information of $\nu$ with respect to $\mu$ restricted to the $\sigma$-field $\mathcal G$, given by
\begin{equation*}
h_{\mathcal G}(\nu;\mu):=\begin{cases}
  \int\frac{\dif\nu}{\dif\mu}|_{\mathcal G} \log\left(\frac{\dif\nu}{\dif\mu}|_{\mathcal G}\right) \dif\mu & \text{ if }  \nu\ll \mu \text{ on } \ \mathcal G;  \\
+\infty & \text{otherwise}.
\end{cases}
\end{equation*}
 \end{itemize}

The level-$2$ entropy functional $J: \mathcal M_1(E)\rightarrow [0, \infty]$ which governs the LDP in our main result is
\begin{equation}\label{rate func}
J(\mu)=\inf\{H(Q)| Q\in \mathcal M_1^s(\Omega) \  \ \text{and } Q_0=\mu\}, \ \ \ \ \forall \mu\in \mathcal M_1(E),
\end{equation}
where $Q_0(\cdot)=Q(X_0\in \cdot)$ is the marginal law at $t=0$.

\subsection{The hyper-exponential recurrence criterion}
   Recall the following hyper\linebreak[4] -exponential recurrence criterion for LDP established by Wu \cite[Theorem 2.1]{Wu01}, also see Gourcy \cite[Theorem 3.2]{Gou2}.

\vskip0.3cm
For  any measurable set $K\in E$, let
\begin{equation}\label{stopping time}
\tau_K=\inf\{t\ge0 \ \text{ s.t.}\   X_t\in K\},\ \ \ \tau_K^{(1)}=\inf\{t\ge1\  \text{ s.t.}\ X_t\in K\}.
\end{equation}

\begin{theorem}\cite{Wu01}\label{thm Wu}
Let $\mathcal A\subset \mathcal M_1(E)$ and assume that
\begin{equation*}\label{condition 1}
\{P_t\}_{t\ge0} \text{ is strong Feller and topologically irreducible on  } E.
\end{equation*}
If for any $\lambda>0$, there exists some compact set $K\subset \subset E$, such that
\begin{equation}\label{condition 2}
\sup_{\nu\in\mathcal A}\E^{\nu}e^{\lambda\tau_K}<\infty, \ \  \text{and} \ \ \
\sup_{x\in K}\E^{x}e^{\lambda\tau_K^{(1)}}<\infty.
\end{equation}
 Then the family $\mathbb P_{\nu}(\mathcal L_t\in\cdot)$ satisfies the LDP on $\mathcal M_1(E)$ w.r.t. the $\tau$-topology with the rate function $J$ defined by \eqref{rate func}, and uniformly for initial measures $\nu$ in the subset $\mathcal A$. More precisely, the following three properties hold:
\begin{itemize}
  \item[(a1)] for any $a\ge0$, $\{\mu\in \mathcal M_1(E); J(\mu)\le a \}$ is compact in  $(\mathcal M_1(E),\tau)$;
  \item[(a2)] (the lower bound) for any open set $G$ in $(\mathcal M_1(E), \tau)$,
   $$
   \liminf_{T\rightarrow \infty}\frac1T\log\inf_{\nu\in\mathcal A}\mathbb P_{\nu}(\mathcal L_T\in G)\ge -\inf_G J;
   $$
  \item[(a3)](the upper bound) for any  closed set $F$ in $(\mathcal M_1(E), \tau)$,
   $$
   \limsup_{T\rightarrow \infty}\frac1T\log\sup_{\nu\in\mathcal A}\mathbb P_{\nu}(\mathcal L_T\in F)\le -\inf_F J.
   $$
\end{itemize}

\end{theorem}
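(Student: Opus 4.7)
The plan is to apply Theorem \ref{thm Wu} with $E = H$ and $\mathcal{A} = \mathcal{M}_1(H)$. The strong Feller property and topological irreducibility are already supplied by Theorem \ref{Thm Xu 13}(2) (irreducibility in the sense of $P_t\mathbf{1}_O(x) > 0$ for every non-empty open $O$ implies accessibility of every point via the resolvent). Consequently the entire work lies in verifying, for every $\lambda > 0$, the hyper-exponential recurrence bounds \eqref{condition 2} for some compact $K_\lambda \subset H$. The natural choice is a large closed ball $K_\lambda = \{x \in V : \|x\|_V \leq R_\lambda\}$, which is compact in $H$ by the compact embedding $V \hookrightarrow H$.

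The crux is the bound $\sup_{x \in H} \mathbb{E}^x e^{\lambda \tau_{K_\lambda}} < \infty$, from which the $\nu$-uniform version follows by integration. The approach rests on the splitting $X_t = Y_t + Z_t$, where $Z_t := \int_0^t e^{-A(t-s)}\, \dif L_s$ is the stochastic convolution and $Y_t := X_t - Z_t$ solves the random PDE
$$\partial_t Y_t + A Y_t = (Y_t + Z_t) - (Y_t + Z_t)^3.$$
Testing against $Y_t$ in $H$, expanding the cubic, and using \eqref{e:L4}--\eqref{e:3H} produces, through the dominant self-cooling term $-\|Y_t\|_{L^4}^4$, an a priori estimate of the shape
$$\|Y_t\|_V^2 \leq C\Bigl(1 + \sup_{s \in [0,t]} \|Z_s\|_{L^4}^{\kappa}\Bigr), \qquad \forall\, t \geq 1,$$
which is \emph{independent of the initial condition} $x \in H$ after a unit time. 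The cubic nonlinearity is exactly what absorbs any polynomial dependence on $\|x\|_H$ into a constant on the time scale $t \geq 1$. Combined with the spatial regularity of $Z_t$ under $\beta > \tfrac{1}{2} + \tfrac{1}{2\alpha}$, this delivers a uniform tail estimate
$$\inf_{x \in H} \mathbb{P}_x(X_1 \in K_R) \geq 1 - \varphi(R), \quad \varphi(R) \to 0 \ \text{as} \ R \to \infty,$$
with $\varphi$ decaying at polynomial rate of order $\alpha$ inherited from $Z$.

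Such a tail estimate upgrades to a hyper-exponential hitting bound by a geometric Markov-chain iteration: given $\lambda > 0$, choose $R_\lambda$ large enough that $\varphi(R_\lambda) e^{\lambda} < 1$; then applying the strong Markov property at integer times shows that $\tau_{K_\lambda}$ is dominated by the first success of a Bernoulli chain with success probability $1 - \varphi(R_\lambda)$, whence
$$\sup_{x \in H} \mathbb{E}^x e^{\lambda \tau_{K_\lambda}} \leq \sum_{n=0}^\infty e^{\lambda(n+1)} \varphi(R_\lambda)^n < \infty.$$
The second hyper-exponential bound, $\sup_{x \in K_\lambda} \mathbb{E}^x e^{\lambda \tau_{K_\lambda}^{(1)}} < \infty$, then follows via the strong Markov property at time $1$: $\tau_{K_\lambda}^{(1)} \leq 1 + \tau_{K_\lambda} \circ \theta_1$, and applying the first bound to the random initial state $X_1 \in H$ yields the desired estimate uniformly over $x \in K_\lambda$. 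Theorem \ref{thm Wu} then produces (a1)--(a3) verbatim.

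The principal obstacle is the $x$-uniform a priori estimate on $Y_t$: with $\alpha$-stable driving noise neither $Y$ nor $Z$ has exponential moments, so no Gaussian-style large deviation for $L$ is available, and one cannot hope to make $\varphi(R)$ exponentially small. The saving grace is precisely that the cubic dissipation renders $\|Y_t\|_V$ essentially insensitive to $\|x\|_H$ for $t \geq 1$, so only a polynomial tail in $Z$ is needed; that polynomial decay, however slow, can always be pushed below any preassigned threshold $e^{-\lambda}$ by enlarging $R_\lambda$. This is exactly the \emph{strong dissipation beats heavy tails} mechanism highlighted in the abstract, and the reason the range $\alpha \in (3/2, 2)$, $\tfrac12 + \tfrac{1}{2\alpha} < \beta < \tfrac32 - \tfrac1\alpha$ suffices.
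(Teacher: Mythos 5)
There is a fundamental mismatch between what you have written and the statement you were asked to prove. The statement is Theorem \ref{thm Wu} itself: Wu's general hyper-exponential recurrence criterion for an arbitrary strong Feller, topologically irreducible Markov process on a Polish space $E$. The paper does not prove this theorem at all --- it is quoted verbatim from \cite[Theorem 2.1]{Wu01} (see also \cite[Theorem 3.2]{Gou2}) and used as a black box. Your proposal opens with ``the plan is to apply Theorem \ref{thm Wu} with $E=H$ and $\mathcal A=\mathcal M_1(H)$,'' which is circular relative to the assigned task: you are invoking the very result you were supposed to establish. A genuine proof of Theorem \ref{thm Wu} would require entirely different machinery --- the Donsker--Varadhan level-3 entropy $H(Q)$ and its contraction to the level-2 rate $J$, the upper bound via the Cram\'er functional for Feller kernels, the role of strong Feller plus irreducibility in identifying the rate function, and the use of the hyper-exponential moment bounds \eqref{condition 2} to obtain exponential tightness in the $\tau$-topology uniformly over $\nu\in\mathcal A$. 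None of this appears in your write-up.

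What you have actually sketched is (a version of) the proof of Theorem \ref{thm main}, i.e.\ the verification of condition \eqref{condition 2} for the stochastic Ginzburg--Landau equation. Judged on those terms your outline is close in spirit to the paper's Section 4, with some differences worth noting: the paper takes the compact set $K=\{x:\|x\|_{H_\delta}\le M\}$ with $\delta\in(0,1/2)$ rather than a ball in $V$, and rather than a uniform-in-$x$ lower bound on $\mathbb P_x(X_1\in K)$ it proves the uniform moment bound $\E_x\|X_1\|_{H_\delta}^p\le C_{\delta,p}$ for $p\in(0,\alpha/4)$, independent of $x$ (Lemmas \ref{l:Y}--\ref{l: X}, resting on the comparison-ODE argument $h'\le -h^2+K_T^2$ for $h(t)=\|Y_t\|_H^2$, which is the precise form of your ``cubic dissipation erases the initial condition'' heuristic), and then runs Chebyshev plus a geometric Markov iteration at integer times --- essentially your Bernoulli-chain argument. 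So the dynamical content of your sketch is sound, but it is a proof of the wrong statement; as a proof of Theorem \ref{thm Wu} it is vacuous.
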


\section{The proof of the main theorem}
In this section, we shall prove  Theorem \ref{thm main} according to Theorem \ref{thm Wu}.

\begin{proof}[Proof of Theorem \ref{thm main}]
\ Let $\{X_t\}_{t\ge0}$ be the solution to Eq. \eqref{e:XEqn} with initial value $x\in H$. By Theorem \ref{Thm Xu 13}, we know that $X$ is strong Feller and irreducible in $H$.  According to Theorem \ref{thm Wu}, to prove Theorem \ref{thm main}, we need prove that the hyper-exponential recurrence condition \ref{condition 2} is fulfilled. The verification of this condition  will be given by Theorem  \ref{thm hyper-exp} below.
\end{proof}

\subsection{Some estimates} \label{section recur}
In this part, we will give some prior estimates, which are necessary for verifying the hyper-exponential recurrence condition \eqref{condition 2}.

\vskip0.3cm
Let $Z_t$ be the following  Ornstein-Uhlenbeck process:
\Be\label{e:OUAlp}
\dif Z_t+A Z_t \dif t= \dif L_t, \ \ \ Z_0=0,
\Ee
where $L_t=\sum_{k \in \Z_*} \beta_k l_k(t) e_k$ is the $\alpha$-stable process defined in Eq. \eqref{e:XEqn}. It is well known that
$$
Z_t=\int_0^t e^{-A(t-s)} \dif L_s=\sum_{k \in \Z_{*}} z_{k}(t) e_k,
$$
where $$z_{k}(t)=\int_0^t e^{-\gamma_k(t-s)}
\beta_k \dif l_k(s).$$
\vskip0.3cm
The following maximal inequality can be found  in \cite[Lemma 3.1]{Xu13}.
\begin{lem} \label{l:ZEst}  For  any $T>0, 0 \leq \theta<\beta-\frac 1{2 \alpha}$ and any $0<p<\alpha$, we have
$$
\E \left[\sup_{0 \leq t \le T}\|A^\theta Z_t\|^p_{H}\right] \le CT^{p/\alpha},
$$
where $C$ depends on $\alpha,\theta, \beta, p$.
\end{lem}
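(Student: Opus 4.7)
The plan is to pass to the Fourier basis, so that $Z_t=\sum_{k\in\Z_*}z_k(t)e_k$ decouples into independent scalar $\al$-stable Ornstein--Uhlenbeck processes $z_k(t)=\be_k\int_0^t e^{-\ga_k(t-s)}\dif l_k(s)$, then bound each $z_k$ pathwise in terms of $l_k$ alone, and finally sum the modewise estimates. Integration by parts in the scalar Stieltjes integral gives
\Bes
z_k(t)=\be_k l_k(t)-\ga_k\be_k\int_0^t e^{-\ga_k(t-s)}l_k(s)\,\dif s,
\Ees
whence $\sup_{0\le t\le T}|z_k(t)|\le 2|\be_k|\sup_{0\le t\le T}|l_k(t)|$. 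Combining this with the self-similarity $(l_k(\la t))_{t\ge 0}\stackrel{d}{=}(\la^{1/\al}l_k(t))_{t\ge 0}$ and the standard fact that $\E\sup_{t\le 1}|l_k(t)|^q<\infty$ for every $q<\al$, one deduces $\E\sup_{0\le t\le T}|z_k(t)|^q\le C_q|\be_k|^q T^{q/\al}$.

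To lift these modewise bounds to $\|A^\th Z_t\|_H$, I would exploit that $p/2<\al/2<1$ and use the quasi-triangle inequality
\Bes
\|A^\th Z_t\|_H^p=\Big(\sum_k\ga_k^{2\th}|z_k(t)|^2\Big)^{p/2}\le\sum_k\ga_k^{p\th}|z_k(t)|^p,
\Ees
which reduces matters to summing $\ga_k^{p\th}\,\E\sup_{t\le T}|z_k(t)|^p$. Using the hypothesis $|\be_k|\le C_2\ga_k^{-\be}$ and $\ga_k\sim|k|^2$, this series is controlled by $T^{p/\al}\sum_{k\in\Z_*}|k|^{-2p(\be-\th)}$, which converges provided $p(\be-\th)>\tfrac 12$.

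The main obstacle is that the hypothesis $\th<\be-\tfrac{1}{2\al}$ is strictly weaker than the convergence condition $\th<\be-\tfrac{1}{2p}$ whenever $p<\al$, so the direct series estimate is available only for exponents sufficiently close to $\al$. To reach the full range $p\in(0,\al)$, I would choose an auxiliary exponent $p'\in(p,\al)$ satisfying $p'>\tfrac{1}{2(\be-\th)}$; this is feasible precisely because $\be-\th>\tfrac{1}{2\al}$ forces $\tfrac{1}{2(\be-\th)}<\al$. Applying the direct bound at exponent $p'$ yields $\E\sup_{0\le t\le T}\|A^\th Z_t\|_H^{p'}\le C\,T^{p'/\al}$, after which Jensen's inequality applied to the concave map $x\mapsto x^{p/p'}$ returns $\E\sup_{0\le t\le T}\|A^\th Z_t\|_H^{p}\le C\,T^{p/\al}$, with $C=C(\al,\th,\be,p)$ as stated.
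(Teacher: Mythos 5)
Your proof is correct. Note that the paper itself gives no argument for this lemma --- it is quoted verbatim from \cite[Lemma 3.1]{Xu13} --- and your proposal is essentially the standard proof of that cited result: modewise integration by parts giving $\sup_{t\le T}|z_k(t)|\le 2|\beta_k|\sup_{t\le T}|l_k(t)|$, self-similarity plus the L\'evy maximal inequality for the scalar stable processes, the concavity bound $(\sum a_k)^{p/2}\le\sum a_k^{p/2}$ to sum the modes, and the auxiliary exponent $p'\in(\max\{p,\tfrac{1}{2(\beta-\theta)}\},\alpha)$ followed by Jensen to reach the full range $0<p<\alpha$.
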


\vskip0.3cm

Let $Y_t:=X_t-Z_t$. Then $Y_t$ satisfies the following equation:
\begin{equation}\label{e:Y}
\dif Y_t+A Y_t\dif t=N(Y_t+Z_t)\dif t, \ \ \ \ Y_0=x.
\end{equation}

\begin{lem}\label{l:Y}  For all $T>0$, we have
 \begin{equation}  \label{e:YtEstZt}
\sup_{t\in [T/2,T]}\|Y_t\|_H\le C(T)\left(1+\sup_{0\le t\le T}\|Z_t\|_{V} \right),
\end{equation}
 where the constant $C(T)$     does not depend on the initial value $Y_0=x$.
\end{lem}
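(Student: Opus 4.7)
The strategy is a standard energy estimate for the shifted equation (4.8), combined with the observation that the cubic nonlinearity $-u^3$ in $N$ provides super-linear dissipation, which delivers a bound on $\|Y_t\|_H$ for $t\in[T/2,T]$ that is independent of the initial datum $x$.

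\medskip

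Concretely, I would test the equation against $Y_t$ in $H$ and obtain
\[
\tfrac12\tfrac{d}{dt}\|Y\|_H^2+\|Y\|_V^2=\langle Y+Z,Y\rangle_H-\langle (Y+Z)^3,Y\rangle_H .
\]
The key algebraic trick is to write $Y=(Y+Z)-Z$ inside the cubic term, giving
\[
\langle (Y+Z)^3,Y\rangle_H=\|Y+Z\|_{L^4}^4-\langle (Y+Z)^3,Z\rangle_H ,
\]
so that the full $L^4$-norm of $Y+Z$ appears with a favorable sign. The leftover mixed term is estimated by Hölder and Young, $|\langle(Y+Z)^3,Z\rangle|\le\|Y+Z\|_{L^4}^3\|Z\|_{L^4}\le \tfrac12\|Y+Z\|_{L^4}^4+C\|Z\|_{L^4}^4$, and the linear contribution is bounded by $|\langle Z,Y\rangle|\le\tfrac12\|Y\|_H^2+\tfrac12\|Z\|_H^2$. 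Together these produce
\[
\tfrac{d}{dt}\|Y\|_H^2+2\|Y\|_V^2+\|Y+Z\|_{L^4}^4\le 3\|Y\|_H^2+\|Z\|_H^2+C\|Z\|_{L^4}^4 .
\]

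\medskip

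Next I would upgrade the linear dissipation to a quadratic one. The Poincaré inequality on $\mathbb T$ (the first eigenvalue of $A$ on $H$ is $\gamma_1=4\pi^2$) eats the $3\|Y\|_H^2$ term, leaving a residual $\|Y\|_V^2$. Then $\|Y\|_H^4\le\|Y\|_{L^4}^4\le 8\|Y+Z\|_{L^4}^4+8\|Z\|_{L^4}^4$ together with the Sobolev embedding $V\hookrightarrow L^4$ (trivial in one dimension) converts $\|Y+Z\|_{L^4}^4$ into a quadratic-in-$\|Y\|_H^2$ dissipation. The net outcome is a Riccati-type differential inequality of the form
\[
\tfrac{d}{dt}\|Y\|_H^2\le -c_1\|Y\|_H^2-c_2\|Y\|_H^4+C\bigl(1+\sup_{0\le s\le T}\|Z_s\|_V^4\bigr),
\]
with $c_1,c_2>0$ absolute constants.

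\medskip

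Finally I would invoke the standard comparison lemma for the ODE $v'=-c_2 v^2+M$: the solution satisfies $v(t)\le \max(1/(c_2 t),\sqrt{M/c_2})$, and the right-hand side is independent of $v(0)$. Applied to $v=\|Y\|_H^2$ with $M\sim 1+\sup_{[0,T]}\|Z\|_V^4$, this yields, for every $t\in[T/2,T]$,
\[
\|Y_t\|_H^2\le C(T)\bigl(1+\sup_{0\le s\le T}\|Z_s\|_V^2\bigr),
\]
and taking square roots produces \eqref{e:YtEstZt}.

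\medskip

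The step I expect to be the most delicate is the passage from the linear-dissipation energy estimate to the Riccati inequality: one must not discard the $\|Y+Z\|_{L^4}^4$ dissipation and must carefully use the inequality $\|Y\|_H^4\le \|Y\|_{L^4}^4$ (which relies on $|\mathbb T|=1$) to convert the $L^4$-coercivity of the nonlinearity into a quadratic lower bound in terms of $\|Y\|_H^2$. Without this quadratic absorption, purely exponential decay would leave a residual $e^{-cT}\|x\|_H^2$, contradicting the required independence of the constant from the initial value.
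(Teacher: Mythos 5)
Your proof is correct and follows essentially the same route as the paper: an $H$-energy estimate in which the cubic dissipation yields the Riccati-type inequality $\frac{d}{dt}\|Y_t\|_H^2\le -c\|Y_t\|_H^4+C\bigl(1+\sup_{0\le s\le T}\|Z_s\|_V^4\bigr)$ (using $\|Y\|_H^4\le\|Y\|_{L^4}^4$ on the unit-volume torus and $\|Z\|_{L^4}\le\|Z\|_V$), followed by comparison with the Riccati ODE, whose solutions forget their initial data after positive time. The only cosmetic difference is that you extract the coercive term as $\|Y+Z\|_{L^4}^4$ by writing $Y=(Y+Z)-Z$, whereas the paper expands $\langle Y,N(Y+Z)\rangle$ and absorbs the cross terms into $-\|Y\|_{L^4}^4$ via Young's inequality; both yield the same differential inequality and the same initial-value-independent bound on $[T/2,T]$.
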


\begin{proof}

By the chain rule, we obtain that
\begin{equation}\label{e:Y1}
\frac{\dif \|Y_t\|_H^2}{\dif t}+2\|Y_t\|_V^2=2\langle Y_t, N(Y_t+Z_t) \rangle.
\end{equation}
Using the following Young inequalities: for any $y,z\in L^4(\T; \R)$ and $C_1>0$, there exists $C_2>0$ satisfying that
\begin{equation*}
\begin{split}
& |\Ll  y, z^3 \Rr_H|=\left|\int_{\mathbb T} y(\xi) z^3 (\xi) \dif \xi\right| \le \frac{\int_{\mathbb T} y^4(\xi) \dif \xi}{C_1}+ C_2\int_{\mathbb T} z^{4}(\xi) \dif \xi, \\
& |\Ll  y^2, z^2 \Rr_H|=\left|\int_{\mathbb T} y^2(\xi) z^2 (\xi) \dif \xi\right| \le \frac{\int_{\mathbb T} y^4(\xi) \dif \xi}{C_1}+C_2\int_{\mathbb T} z^4(\xi) \dif \xi, \\
& |\Ll  y^3, z  \Rr_H|=\left|\int_{\mathbb T} y^3(\xi) z(\xi) \dif \xi\right| \le \frac{ \int_{\mathbb T} y^4(\xi) \dif \xi}{C_1}+ C_2\int_{\mathbb T} z^4(\xi) \dif \xi,
\end{split}
\end{equation*}
and using    H\"older inequality and the elementary inequality $2\sqrt a\le a/b+b$ for all $a,b>0$, we  obtain that there exists a constant $C\ge1$ satisfying that
$$
2\langle Y_t, N(Y_t+Z_t) \rangle\le -\|Y_t\|_{L^4}^4+C(1+\|Z_t\|_{L^4}^4).
$$
This  inequality, together with  Eq. \eqref{e:L4},  Eq. \eqref{e:Y1} and   H\"older inequality, implies that
\begin{equation}\label{e:Y2}
\frac{\dif \|Y_t\|_H^2}{\dif t}+2\|Y_t\|_V^2\le -\|Y_t\|_{H}^4+C\left(1+\|Z_t\|_V^4\right).
\end{equation}

For any $t\ge0$, denote $$h(t):= \|Y_t\|_H^2,\ \  \ K_T:=\sup_{0\le t\le T}\sqrt{C(1+\|Z_t\|_V^4)}\ge 1.$$
 By Eq. \eqref{e:Y2}, we have
$$
\frac{\dif h(t)}{\dif t} \le -h^2(t)+K_T^2, \ \ \ \forall t\in[0,T],
$$
with the initial value $h(0)=\|x\|_H^2\ge0$.

By the comparison theorem (e.g., the deterministic case of   \cite[Chapter VI, Theorem 1.1]{IW}),  we obtain that
\begin{equation}\label{e:h g}
h(t)\le g(t), \ \ \ \ \ \ \forall t\in[0,T],
\end{equation}
where the function $g$ solves the following equaiton
\begin{equation}\label{e:g}
\frac{\dif g(t)}{\dif t}= -g^2(t)+K_T^2, \ \ \ \forall t\in[0,T],
\end{equation}
with the initial value $g(0)=h(0)$. The solution of  Eq. \eqref{e:g} is
\begin{equation*}
g(t)=K_T+2K_T\left( \frac{g(0)+K_T}{g(0)-K_T}e^{2K_T t}-1\right)^{-1}, \ \  \ \forall t\in[0,T],
\end{equation*}
where  it is understood that  $g(t)\equiv K_T$ when  $g(0)=K_T$.  It is easy to show that
  \begin{itemize}
   \item[(1)]if  $g(0)\in [0,K_T]$, we have
   $$
   g(t)\le K_T, \ \ \ \forall t\in[0,T];
   $$
       \item[(2)] if $g(0)>K_T$, noticing that $K_T\ge 1$, we have that for all $t\in[T/2,T]$,
          \begin{align*}
g(t)\le& K_T+2K_T\left(  e^{2K_T t}-1\right)^{-1}\\
\le &  K_T\left(1+2(   e^{T}-1)^{-1}\right).
       \end{align*}
 \end{itemize}
 Therefore, for any initial value $g(0)$, we have
 $$
 g(t)\le K_T\left(1+2(   e^{T}-1)^{-1}\right), \ \ \ \forall t\in[T/2,T].
  $$
  This inequlity, together with Eq. \eqref{e:h g} and the definition of $K_T$, immediately implies the required estimate \eqref{e:YtEstZt}.

         The proof is complete.
\end{proof}

\vskip0.3cm

\begin{lem}\label{l:Y2}
For all $T>0$, $\delta \in (0,1/2)$ and  $p \in (0,\alpha/4)$, we have
\Be
\E_x\left[\|Y_{T}\|^p_{H_\delta} \right]\le C_{T,\delta,p}
\Ee
where the constant $C_{T,\delta, p}$  does not depend on the initial  value $Y_0=x$.
\end{lem}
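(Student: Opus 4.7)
The plan is to work with the mild formulation of equation \eqref{e:Y} starting from time $T/2$ rather than from time $0$, so that the initial condition is replaced by $Y_{T/2}$, which has already been controlled independently of $x$ in Lemma \ref{l:Y}. Writing
\[
Y_T = e^{-AT/2} Y_{T/2} + \int_{T/2}^T e^{-A(T-s)} N(Y_s+Z_s)\,\dif s,
\]
applying $A^{\delta}$ and using the semigroup bound \eqref{e:eAEst} yields
\[
\|Y_T\|_{H_\delta} \le C(T/2)^{-\delta}\|Y_{T/2}\|_H + C\int_{T/2}^T (T-s)^{-\delta} \bigl(\|Y_s+Z_s\|_H + \|(Y_s+Z_s)^3\|_H\bigr)\,\dif s.
\]
The first term is already controlled by $C_T(1+\sup_{0\le t\le T}\|Z_t\|_V)$ via Lemma \ref{l:Y}.

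For the nonlinear integral, I would use the cubic estimate \eqref{e:3H} to write
$\|(Y_s+Z_s)^3\|_H \le C\|Y_s+Z_s\|_V \|Y_s+Z_s\|_H^2$, then factor out $\sup_{s\in[T/2,T]}\|Y_s+Z_s\|_H^2$, which is again controlled by $C_T(1+\sup_t\|Z_t\|_V)^2$ by Lemma \ref{l:Y}. What remains is the singular integral $\int_{T/2}^T (T-s)^{-\delta}\|Y_s+Z_s\|_V\,\dif s$. For the $Z$-part I use $\sup_t\|Z_t\|_V$ and integrability of $(T-s)^{-\delta}$ near $s=T$ (valid since $\delta<1/2<1$). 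For the $Y$-part I apply Cauchy--Schwarz:
\[
\int_{T/2}^T (T-s)^{-\delta}\|Y_s\|_V\,\dif s \le \bigg(\int_{T/2}^T (T-s)^{-2\delta}\dif s\bigg)^{1/2} \bigg(\int_{T/2}^T \|Y_s\|_V^2\,\dif s\bigg)^{1/2},
\]
in which the first factor is finite precisely because $\delta<1/2$.

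The key quantitative input is therefore an $x$-independent bound for $\int_{T/2}^T \|Y_s\|_V^2\,\dif s$. Integrating the energy inequality \eqref{e:Y2} from $T/2$ to $T$ and dropping the negative $-\|Y_s\|_H^4$ term gives
\[
2\int_{T/2}^T \|Y_s\|_V^2\,\dif s \le \|Y_{T/2}\|_H^2 + CT\bigl(1+\sup_{0\le t\le T}\|Z_t\|_V^4\bigr),
\]
and the right-hand side is bounded by $C_T(1+\sup_t\|Z_t\|_V)^4$ by Lemma \ref{l:Y}. Assembling all pieces yields a pointwise deterministic bound of the form
\[
\|Y_T\|_{H_\delta} \le C_{T,\delta}\bigl(1+\sup_{0\le t\le T}\|Z_t\|_V\bigr)^{4}
\]
with $C_{T,\delta}$ independent of $x$.

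Finally, taking $p$-th powers and expectations, I invoke Lemma \ref{l:ZEst} with $\theta=1/2$ (permissible since $\beta>\tfrac12+\tfrac{1}{2\alpha}$ gives $\beta-\tfrac{1}{2\alpha}>\tfrac12$) and exponent $4p$. The constraint $4p<\alpha$, equivalently $p<\alpha/4$, is exactly what is required for $\E[\sup_{0\le t\le T}\|Z_t\|_V^{4p}] \le C_{T,p}<\infty$, and this matches the assumption of the lemma. The main technical obstacle is keeping track of the polynomial power of $\sup_t\|Z_t\|_V$ that is accumulated through the cubic nonlinearity and the energy estimate; any loosening in that chain would force $p$ to be smaller than $\alpha/4$. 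Careful bookkeeping of the factor of $\|Z\|_V^2$ from $\|Y+Z\|_H^2$ together with one more $\|Z\|_V^2$ coming from the $L^2_t V$-bound on $Y$ shows the total power is indeed $4$, which is sharp for this argument.
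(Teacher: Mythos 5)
Your proposal is correct and follows essentially the same route as the paper's proof: the mild formulation restarted at $T/2$, the smoothing bound \eqref{e:eAEst}, the cubic estimate \eqref{e:3H}, Cauchy--Schwarz against $(T-s)^{-2\delta}$, the integrated energy inequality \eqref{e:Y2} for $\int_{T/2}^T\|Y_s\|_V^2\,\dif s$, and finally Lemma \ref{l:ZEst} with $\theta=1/2$ and exponent $4p<\alpha$. The only cosmetic difference is that you keep $(Y_s+Z_s)^3$ together where the paper splits it into $\|Y_s^3\|_H+\|Z_s^3\|_H$; the power-$4$ bookkeeping and the resulting constraint $p<\alpha/4$ match the paper exactly.
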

\begin{proof} Since
$$Y_{T}=e^{-AT/2} Y_{T/2}+\int_{T/2}^T e^{-A(T-s)} N(Y_s+Z_s) \dif s,$$
for any $\delta \in (0,1/2)$, by the inequalities \eqref{e:eAEst}-\eqref{e:3H} and Lemma \ref{l:Y}, there exists a constant $C=C_{T,\delta}$ (whose value may be different from line to line by convention) satisfied  that
\begin{align*}
\|Y_{T}\|_{H_\delta}
\le & C \|Y_{T/2}\|_H+C\int_{T/2}^T (T-s)^{-\delta} \|N(Y_s+Z_s)\|_H \dif s\notag \\
  \le& C \|Y_{T/2}\|_H+C\int_{T/2}^T (T-s)^{-\delta} (\|Y_s\|_H+\|Z_s\|_H+ \|Y_s^3\|_H+\|Z_s^3\|_H) \dif s \notag \\
 \le & C \|Y_{T/2}\|_H+C\int_{T/2}^T (T-s)^{-\delta} (\|Y_s\|_H+\|Z_s\|_V+\|Y_s\|_{V} \|Y_s\|^2_H+\|Z_s\|^3_V) \dif s \notag\\
  \le & C \left(1+\sup_{0 \le t \le T} \|Z_t\|^3_V\right)+C\int_{T/2}^T (T-s)^{-\delta} \|Y_s\|_{V} \|Y_s\|^2_H \dif s.
\end{align*}
Next, we estimate  the last term in above inequality: by Eq. \eqref{e:Y2} and Lemma \ref{l:Y} again, we have
\begin{align*}
&\int_{T/2}^T (T-s)^{-\delta} \|Y_s\|_{V} \|Y_s\|^2_H \dif s\\
 \le &C\left(1+\sup_{0 \le t \le T}\|Z_t\|^2_V\right)\int_{T/2}^T (T-s)^{-\delta} \|Y_s\|_{V} \dif s \\
\le& C\left(1+\sup_{0 \le t \le T}\|Z_t\|^2_V\right)\left(\int_{T/2}^T (T-s)^{-2\delta} \dif s\right)^{\frac 12} \left(\int_{T/2}^T\|Y_s\|^2_{V} \dif s\right)^{\frac 12}  \\
\le& C\left(1+\sup_{0 \le t \le T}\|Z_t\|^2_V\right) \left(\|Y_{T/2}\|^2_H+\int_{T/2}^T (1+\|Z_s\|^4_V)\dif s\right)^{\frac 12} \\
\le &  C\left(1 +\sup_{0 \le t \le T} \|Z_t\|^4_V\right).
\end{align*}
Hence, by Lemma \ref{l:ZEst}, we obtain that for any $p \in (0,\alpha/4)$,
$$
\E_x\left[\|Y_{T}\|^p_{H_\delta}\right] \le C_{T,\delta,p}.
$$
The proof is complete.
\end{proof}

By Lemma  \ref{l:ZEst} and Lemma  \ref{l:Y2},  we  obtain that
\begin{lem}\label{l: X}
For all $T>0$,   $\delta \in (0,1/2)$ and $p \in (0,\alpha/4)$, we have
$$
\E_{x}\left[\|X_{T}\|^p_{H_\delta}\right] \le C_{T,\delta,p}
$$
where the constant $C_{T,\delta,p}$   does not depend on the initial  value $X_0=x$.
\end{lem}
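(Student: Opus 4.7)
The plan is to use the decomposition $X_t = Y_t + Z_t$ introduced just before Lemma \ref{l:Y}, where $Z_t$ is the Ornstein--Uhlenbeck process solving \eqref{e:OUAlp} and $Y_t = X_t - Z_t$. In the $H_\delta$-norm the triangle inequality gives
\begin{equation*}
\|X_T\|_{H_\delta} \le \|Y_T\|_{H_\delta} + \|Z_T\|_{H_\delta},
\end{equation*}
so after raising to the power $p$ and taking expectations (using $|a+b|^p \le |a|^p+|b|^p$ when $p\in(0,1]$, or $|a+b|^p\le 2^{p-1}(|a|^p+|b|^p)$ otherwise) it suffices to bound each of $\E_x[\|Y_T\|_{H_\delta}^p]$ and $\E[\|Z_T\|_{H_\delta}^p]$ separately by constants independent of the initial condition $x$.

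The first term is precisely the content of Lemma \ref{l:Y2}, which gives $\E_x[\|Y_T\|_{H_\delta}^p] \le C_{T,\delta,p}$ for all $\delta\in(0,1/2)$ and $p\in(0,\alpha/4)$, with the constant independent of $x$. This is the substantive estimate, and it is already established.

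For the second term, I would apply Lemma \ref{l:ZEst} with $\theta=\delta$. The parameter constraint required is $\delta < \beta - \tfrac{1}{2\alpha}$, and this is automatic here: the standing assumption $\beta > \tfrac12 + \tfrac{1}{2\alpha}$ gives $\beta - \tfrac{1}{2\alpha} > \tfrac12 > \delta$ for any $\delta\in(0,1/2)$. Since also $p < \alpha/4 < \alpha$, Lemma \ref{l:ZEst} yields
\begin{equation*}
\E[\|Z_T\|_{H_\delta}^p] \le \E\Bigl[\sup_{0\le t\le T}\|A^\delta Z_t\|_H^p\Bigr] \le C\,T^{p/\alpha},
\end{equation*}
with a constant depending only on $\alpha,\beta,\delta,p$, and in particular not on $x$ (since $Z_0=0$).

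Combining the two bounds yields $\E_x[\|X_T\|_{H_\delta}^p] \le C_{T,\delta,p}$ uniformly in $x\in H$. There is no real obstacle; the whole argument is a routine triangle-inequality splitting, and the only point to check is that the parameter window $\delta\in(0,1/2)$ imposed in Lemma \ref{l:Y2} is compatible with the regularity range for $Z_t$ coming from Lemma \ref{l:ZEst}, which is guaranteed by the hypothesis on $\beta$.
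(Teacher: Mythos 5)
Your proof is correct and is exactly the argument the paper intends: the paper simply states that Lemma \ref{l: X} follows from Lemma \ref{l:ZEst} and Lemma \ref{l:Y2}, which is the decomposition $X_T=Y_T+Z_T$ you carry out. Your verification that $\delta<1/2<\beta-\tfrac{1}{2\alpha}$ makes Lemma \ref{l:ZEst} applicable with $\theta=\delta$ is the only detail the paper leaves implicit, and you have it right.
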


\subsection{Hyper-exponential Recurrence}

In this subsection, we will verify the hyper-exponential recurrence condition \eqref{condition 2}.
\vskip0.3cm

For any $\delta\in(0,1/2), M>0$, define the hitting time  of $\{X_n\}_{n\ge1}$:
\begin{equation}
\tau_M=\inf\{k\ge1;\|X_{k}\|_{H_{\delta}}\le M\}.
\end{equation}
 Let
 $$
 K:=\{x\in H_{\delta}; \|x\|_{H_{\delta}}\le M\}.
 $$
 Clearly, $K$ is compact in $H$. Recall the definitions of  $\tau_K$ and $\tau_K^1$ in \eqref{stopping time}. It is obvious  that
\begin{equation}
\tau_K\le \tau_M,\ \ \ \ \ \tau_K^{1}\le \tau_M.
\end{equation}
This fact, together with the following important theorem,  implies the hyper-exponential recurrence condition  \eqref{condition 2}.

\begin{theorem}\label{thm hyper-exp} For any $\lambda>0$,  there exists $M=M_{\lambda, \delta}$  such that
$$
\sup_{\nu\in\mathcal M_1(H)}\E^{\nu}[e^{\lambda\tau_M}]<\infty.
$$
\end{theorem}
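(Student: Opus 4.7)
The strategy is to reduce the exponential integrability of $\tau_M$ to a \emph{uniform} one-step tail bound on $\|X_1\|_{H_\delta}$, then iterate the Markov property. The crucial input is already in place: Lemma \ref{l: X} gives, for any fixed $p \in (0, \alpha/4)$, a bound $\E_x[\|X_1\|_{H_\delta}^p] \le C_{1,\delta,p}$ with constant independent of the starting point $x \in H$. By Markov's inequality,
\begin{equation*}
\rho(M) := \sup_{y \in H} \PP_y\bigl(\|X_1\|_{H_\delta} > M\bigr) \le \frac{C_{1,\delta,p}}{M^p} \longrightarrow 0 \quad \text{as } M \to \infty.
\end{equation*}

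First I will iterate this at integer times via the Markov property. Noting that $\{\tau_M > k\} = \{\|X_j\|_{H_\delta} > M \text{ for } j = 1, \ldots, k\}$ and conditioning on $\mathcal{F}_{k-1}$,
\begin{equation*}
\PP_x(\tau_M > k) = \E_x\!\left[\mathbf{1}_{\{\tau_M > k-1\}} \, \PP_{X_{k-1}}(\|X_1\|_{H_\delta} > M)\right] \le \rho(M)\, \PP_x(\tau_M > k-1),
\end{equation*}
so by induction $\PP_x(\tau_M > k) \le \rho(M)^{k}$ for every $k \ge 0$ and every $x \in H$.

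Next I will convert this geometric decay into the exponential moment bound. Since $\PP_x(\tau_M = k) \le \PP_x(\tau_M \ge k) \le \rho(M)^{k-1}$, summing a geometric series gives
\begin{equation*}
\E_x\bigl[e^{\lambda \tau_M}\bigr] = \sum_{k=1}^\infty e^{\lambda k} \PP_x(\tau_M = k) \le \sum_{k=1}^\infty e^{\lambda k} \rho(M)^{k-1} = \frac{e^\lambda}{1 - e^\lambda \rho(M)},
\end{equation*}
provided $M$ is chosen so that $e^\lambda \rho(M) < 1$ (e.g., $M > (2 e^\lambda C_{1,\delta,p})^{1/p}$). This estimate does not depend on $x$, so integrating against any $\nu \in \mathcal{M}_1(H)$ yields
\begin{equation*}
\sup_{\nu \in \mathcal{M}_1(H)} \E^\nu\bigl[e^{\lambda \tau_M}\bigr] \le \frac{e^\lambda}{1 - e^\lambda \rho(M)} < \infty,
\end{equation*}
as claimed.

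The real work has been absorbed upstream into Lemma \ref{l: X}, which itself rests on the dissipative estimate of Lemma \ref{l:Y}: the cubic drift $-u^3$ forces the $H$-norm of $Y_t = X_t - Z_t$ into a ball whose radius is controlled purely by the Ornstein--Uhlenbeck driver $Z$, \emph{independently of the initial datum}. This is precisely the ``strong dissipation beats heavy-tailed noise'' mechanism advertised in the introduction, and it is what makes the single-step tail $\rho(M)$ uniform over all starting points---the crux that lets the Markov iteration above apply with $\mathcal{A} = \mathcal{M}_1(H)$ in Theorem \ref{thm Wu}. Once Lemma \ref{l: X} is granted, the remainder is a routine Markov-iteration-and-geometric-series argument with no delicate step.
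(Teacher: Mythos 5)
Your proof is correct and follows essentially the same route as the paper: apply Lemma \ref{l: X} (with $T=1$) and Chebyshev's inequality to get a one-step tail bound uniform in the starting point, iterate via the Markov property to obtain geometric decay of $\PP(\tau_M>n)$, and sum the resulting series, choosing $M$ large enough that the ratio beats $e^{\lambda}$. The only cosmetic difference is that you sum over the integer-valued distribution of $\tau_M$ directly while the paper integrates the tail via Fubini; the substance is identical.
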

\begin{proof}
For any $n\in\N$, let
$$
B_n:=\left\{\|X_{j}\|_{H_{\delta}}>M; j=1,\cdots,n\right\}=\{\tau_M>n\}.
$$
By the Markov property of $\{X_n \}_{n\in\N}$, Chebychev's inequality and Lemma \ref{l: X}, we obtain that for any $\nu\in \M_1(H)$, $p \in (0,\alpha/4)$,
\begin{align*}
\PP_{\nu}(B_n)=&\PP_{\nu}(B_{n-1})\cdot\PP_{\nu}(B_n|B_{n-1})\\
\le&\PP_{\nu}(B_{n-1})\cdot \frac{\E_{X_{n-1}}\left[\|X_n\|_{H_{\delta}}^p\right]}{M^p}\\
\le & \PP_{\nu}(B_{n-1})\cdot\frac{C_{\delta,p}}{M^p},
\end{align*}
where $C_{\delta,p}$ is the constant in Lemma \ref{l: X} (taking $T=1$).

By induction, we have for any $n\ge0$,
$$
\PP_{\nu}(\tau_M>n)=\PP_{\nu}(B_n)\le \left(\frac{C_{\delta,p}}{M^p}\right)^n.
$$
This inequality, together with Fubini's theorem, implies that for any $\lambda>0,\nu\in\M_1(H)$,
\begin{align*}
\E_{\nu}\left[ e^{\lambda\tau_M}\right]=&\int_0^{\infty}\lambda e^{\lambda t}\PP_{\nu}(\tau_M>t)\dif t\\
\le &\sum_{n=0}^{\infty}\lambda e^{\lambda (n+1)}\PP_{\nu}(\tau_M>n)\\
\le &\sum_{n=0}^{\infty}\lambda e^{\lambda (n+1)}\left(\frac{C_{\delta,p}}{M^p}\right)^n,
\end{align*}
which is finite as $M>(C_{\delta,p} e^{\lambda})^{1/p}$.

The proof is complete.
\end{proof}

\vskip0.3cm

\noindent{\bf Acknowledgments}: We would like to gratefully thank Armen Shirikyan for pointing out the key estimate \eqref{e:YtEstZt} for us. R. Wang thanks the  Faculty of Science and Technology, University of Macau, for finance support and hospitality.    He was supported by Natural Science Foundation of China 11301498, 11431014 and the Fundamental Research Funds for the Central Universities WK0010000048. J. Xiong was  supported by Macao Science and Technology Fund FDCT 076/2012/A3 and Multi-Year Research Grants of the University of Macau Nos. MYRG2014-00015-FST and MYRG2014-00034-FST. L. Xu is supported by the grants: SRG2013-00064-FST, MYRG2015-00021-FST and Science and Technology Development Fund, Macao S.A.R FDCT 049/2014/A1. All of the three authors are supported by the research project RDAO/RTO/0386-SF/2014.


\bibliographystyle{amsplain}

\end{document}